\documentclass[11pt,leqno,oneside,letterpaper, openany]{amsart}
\usepackage[letterpaper,left=1.75truein, top=1truein]{geometry}
\usepackage{amsmath,amsfonts,amssymb,amscd,amsthm,amsbsy,epsf,graphics} 
\usepackage{stmaryrd}
\usepackage{setspace}
\usepackage[title]{appendix}
\usepackage{comment} 
\usepackage[normalem]{ulem}

\usepackage[dvipsnames]{xcolor}
\usepackage{color}

\definecolor{org}{RGB}{225,119,0}

\usepackage[colorlinks,linkcolor=blue,citecolor=blue]{hyperref} 

\usepackage{bm}
\usepackage{caption,subcaption,graphicx,epsfig}
\usepackage{tikz-cd,calc}
\usetikzlibrary{knots}
\usetikzlibrary{decorations.markings}

\usepackage{lipsum}

\makeatletter
\def\l@subsection{\@tocline{2}{0pt}{4pc}{5pc}{}}
\makeatother
 
\let\oldtocsection=\tocsection
 
\let\oldtocsubsection=\tocsubsection
 
\let\oldtocsubsubsection=\tocsubsubsection
 
\renewcommand{\tocsection}[2]{\hspace{0em}\oldtocsection{#1}{#2}}
\renewcommand{\tocsubsection}[2]{\hspace{0em}\oldtocsubsection{#1}{#2}}
\renewcommand{\tocsubsubsection}[2]{\hspace{2em}\oldtocsubsubsection{#1}{#2}}

\DeclareGraphicsExtensions{.pdf}

\textwidth=6.25in \textheight=9in
\addtolength{\evensidemargin}{-.75in}
\addtolength{\oddsidemargin}{-.75in}

\parindent=0pt
\parskip=8pt

\newtheorem{theorem}{Theorem}[section] 
\newtheorem*{theorem*}{Theorem}

\newtheorem*{corollary*}{Corollary}
\newtheorem{lemma}[theorem]{Lemma}

\newtheorem*{proposition*}{Proposition}

\newtheorem{question}[theorem]{Question}
\newtheorem*{question*}{Question}

\theoremstyle{definition}

\newtheorem*{problem*}{Problem}

\newtheorem*{remark*}{Remark}

\newtheorem*{acknowledgement*}{Acknowledgements}

\newtheoremstyle{cases}
  {12pt plus 6 pt}
  {2pt}
  {\bfseries}   
  {}
  {\bfseries}
  {.}
  {.5em}
  {}
\theoremstyle{cases}

\numberwithin{subcase}{case} 
\numberwithin{subsubcase}{subcase}
\numberwithin{equation}{subsection} 

\graphicspath{{figures/}} 

\begin{document}

\title{On the homeomorphism problem for 4-manifolds}

\author[Cameron McA. Gordon]{Cameron McA. Gordon} 
\address{Department of Mathematics, University of Texas at Austin, 1 University Station, Austin, TX 78712, USA.}
\email{gordon@math.utexas.edu}



\begin{abstract}
We show that there is no algorithm to decide whether or not a given 4-manifold is homeomorphic to the connected sum of 12 copies of $S^2 \times S^2$.
\end{abstract} 

\maketitle

\section{Introduction}
In [Mar] Markov showed that the homeomorphism problem for closed 4-manifolds is algorithmically unsolvable. In fact he showed that for some integer $k$ the {\it recognition problem} for $\#_k (S^2 \times S^2)$, the connected sum of $k$ copies of $S^2 \times S^2$, is unsolvable, i.e. there is no algorithm to decide whether or not a given 4-manifold is homeomorphic to $\#_k (S^2 \times S^2)$. 

To describe this in more detail, let us define a {\it $k$-relator Adjan-Rabin set} to be a recursively enumerable set $\mathcal P$ of finite $k$-relator group presentations, such that there is no algorithm to decide whether or not the group presented by a given $P \in \mathcal P$ is trivial. Such sets were shown to exist, for some $k$, by Adjan [A] and Rabin [Ra], using the existence, proved by Novikov [N] and Boone [Boo], of a finitely presented group with unsolvable word problem. Markov showed that if there exists a $k$-relator Adjan-Rabin set then the recognition problem for $\#_k S^2 \times S^2$ is unsolvable.

The author showed [G] that from a finite $m$-relator presentation of a group with unsolvable word problem one can construct an $(m + 2)$-relator Adjan-Rabin set; there is also an account of this work in the survey article [Mi]. In [Bor] Borisov constructed a finite 12-relator presentation of a group with unsolvable word problem. It follows that the recognition problem for $\#_{14} (S^2 \times S^2)$ is unsolvable. See [S1],[S2],[CL].

The purpose of the present note is to offer the following improvement.     

\begin{theorem}
\label{thm: rec prob}
The recognition problem for $\#_{12} (S^2 \times S^2)$ is unsolvable.
\end{theorem}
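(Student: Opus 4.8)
The plan is to deduce Theorem~\ref{thm: rec prob} from Markov's theorem by exhibiting a \emph{12-relator} Adjan--Rabin set; everything then reduces to sharpening the group-theoretic input, since combining [G] with [Bor] only produces a $14$-relator set and hence $\#_{14}(S^2\times S^2)$. Concretely, I would start from a fixed finite presentation $B=\langle x_1,\dots,x_n\mid s_1,\dots,s_{12}\rangle$ of Borisov's group, which has unsolvable word problem, and build, uniformly in an input word $w$ in the $x_i$, a presentation $P_w$ such that $G(P_w)$ is trivial if and only if $w=_B1$ (or its negation), while keeping the number of relators equal to $12$.

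First I would recall the Rabin-style trivializer underlying [G]: one adjoins a bounded number of stable letters and relations so that the hypothesis about $w$ propagates through a chain of HNN and amalgamation moves and collapses the whole group, whereas in the opposite case the group is visibly nontrivial via normal-form arguments. In [G] this costs two extra relators, giving $12+2=14$. The improvement I would aim for is to absorb those two relations into the existing relator budget: arrange the two auxiliary relations each to have the Tietze-eliminable form ``new generator $=$ word in the remaining generators,'' so that after eliminating the two new generators one is left with a genuine $12$-relator presentation of the \emph{same} group $G(P_w)$. Equivalently, I would show that two of the fourteen relators produced by the construction are consequences of the other twelve, uniformly in $w$, and delete them. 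The family $\{P_w\}$ is recursively enumerable because the construction is uniform, and any algorithm deciding triviality of $G(P_w)$ would decide whether $w=_B1$, contradicting unsolvability of the word problem in $B$; thus $\{P_w\}$ is a $12$-relator Adjan--Rabin set, and Markov's theorem yields the conclusion.

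The hard part will be the group theory of the previous paragraph: redesigning the trivializer so that exactly two of its defining relations are eliminable against two generators \emph{without} disturbing the equivalence ``$G(P_w)=1 \iff w=_B1$'' and without breaking the normal-form arguments that guarantee nontriviality in the other case. This is delicate because the natural Rabin relations (for instance those of the form $t^{-1}ut=u^{2}$ that force an element to die once a commutator vanishes) are \emph{not} of generator-defining type, so one must either reorganize the construction or exploit special features of Borisov's specific $12$-relator presentation to free up the two relators. Once the $12$-relator Adjan--Rabin set is in hand, the manifold side is packaged in Markov's theorem: when $G(P_w)=1$ the associated $4$-manifold is simply connected with even, indefinite intersection form of rank $24$ and trivial Kirby--Siebenmann invariant, so by Freedman's classification it is homeomorphic to $\#_{12}(S^2\times S^2)$, while if $G(P_w)\neq1$ it has nontrivial fundamental group and cannot be. Hence recognition of $\#_{12}(S^2\times S^2)$ is unsolvable.
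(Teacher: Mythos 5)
Your overall strategy---feed a smaller Adjan--Rabin set into Markov's construction---is the right frame, but the proposal has a genuine gap at its core: the $12$-relator Adjan--Rabin set is never actually constructed. You state that the two extra relators of the trivializer in [G] should be ``absorbed'' by making them generator-defining, or shown to be consequences of the other twelve, and you yourself flag this as ``the hard part'' and ``delicate.'' That hard part is precisely the mathematical content of the theorem, and nothing in the proposal discharges it. Indeed the paper does \emph{not} achieve a $12$-relator Adjan--Rabin set; it only gets down to $13$ (Theorem 2.2), and it does so not by reorganizing the trivializer abstractly but by exploiting a specific arithmetic feature of a modified Borisov presentation: two generators whose images in the abelianization have coprime finite orders (property (2.1)). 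That property is what lets relation (v) alone, once $[w]=1$ forces $b=1$ and hence $\alpha=1$, also kill the remaining generator $a$ via $a^{q_i^2}=1$ with $\gcd(q_1,\dots,q_p)=1$, saving exactly one relator---not two. Your plan to save two relators purely on the algebraic side is unsupported, and there is no evidence in the proposal (or the paper) that it can be done.

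The second reduction in the paper comes from the topology, an ingredient your proposal omits entirely. Lemma 3.2 sharpens Markov's lemma: attaching only $n-1$ of the $n$ dual $2$-handles leaves $W'_P\cong (S^1\times D^4)\cup H(\boldsymbol{\gamma})$ when $G_P=1$, and since $\pi_1(W'_P)=1$ forces $\gcd([\gamma_1],\dots,[\gamma_k])=1$ in $\pi_1(S^1\times D^4)\cong\mathbb{Z}$, handle slides reduce the attaching data to $(1,0,\dots,0)$, cancelling one $2$-handle against the $1$-handle and yielding $\natural_{(k-1)}(S^2\times D^3)$. This is how a $13$-relator Adjan--Rabin set suffices for $\#_{12}(S^2\times S^2)$. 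A minor further point: your appeal to Freedman's classification for the ``if $G_P=1$'' direction is a workable alternative in the topological category (granting a verification of the intersection form and the Kirby--Siebenmann invariant), but it is not needed---the paper's argument is an explicit handle calculus. The essential missing pieces are (a) any actual construction certifying the relator count you need, and (b) the topological cancellation that the paper uses to make up the shortfall.
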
 

A natural question is whether $k$ can be reduced further, in particular whether it can be reduced to 0.

\begin{question}
Is the recognition problem for $S^4$ unsolvable?
\end{question}
The recognition problem for $S^n$ is unsolvable for $n \ge 5$ [VKF, Appendix by S.P. Novikov], and solvable for $n \le 3$ [Ru],[T]. 

The proof of Theorem 1.1 has two parts, one algebraic and the other topological, each enabling $k$ to be reduced by 1. The first is discussed in Section 2, and the second in Section 3.

\section{The Algebra} 

Let $(x_1,...,x_n : r_1,...,r_m)$ be a finite presentation of a group $G$. Let $\bar x_i$ denote the image of $x_i$ in $G/[G,G]$.

Consider the following property:

(2.1) \; there exists $p$, $1 \le p \le n$, such that for $1 \le i \le p$, $\bar x_i$ has finite order $q_i \ge 1$, where gcd$(q_1,...,q_p) = 1$. 

\begin{lemma}
If there exists a group with unsolvable word problem having a finite $m$-relator presentation that satisfies (2.1), then there exists an $(m + 1)$-relator Adjan-Rabin set.
\end{lemma}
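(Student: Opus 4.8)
The plan is to realize an $(m+1)$-relator Adjan-Rabin set as a recursively enumerable family $\mathcal P=\{P_w\}$, indexed by words $w=w(x_1,\dots,x_n)$, where each $P_w$ presents a group $K_w$ on the generators $x_1,\dots,x_n$ together with one auxiliary generator, subject to the $m$ relators $r_1,\dots,r_m$ of $G$ and exactly one further relator $\rho_w$ built effectively from $w$. The defining requirement is the equivalence that $K_w$ is trivial if and only if $w=_G 1$. Granting this, the assignment $w\mapsto P_w$ is computable, so $\mathcal P$ is recursively enumerable and consists of $(m+1)$-relator presentations; and since any algorithm deciding triviality of the $K_w$ would decide the word problem of $G$, which is unsolvable, $\mathcal P$ is an Adjan-Rabin set. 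Thus everything reduces to constructing $K_w$ with the stated collapse property while controlling its relator count.

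For the construction I would follow the Adjan-Rabin strategy used to obtain the $(m+2)$-relator sets of [G], but economize by one relator using (2.1). Retaining $r_1,\dots,r_m$, I introduce an auxiliary stable letter $t$ and a single relator $\rho_w$ that exhibits an HNN/amalgam structure over $G$ whose associated data encode the word $w$. The relator $\rho_w$ is arranged so that it plays the dual role of the two ``overhead'' relators in the classical construction, namely detecting $w$ and, when $w$ is trivial, firing the collapse.

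The collapse direction is where (2.1) does its work. When $w=_G 1$ the relator $\rho_w$ degenerates, and the plan is to show that it then forces the auxiliary generator $t$ to become trivial and $x_1,\dots,x_n$ all to be identified with a single element $z$, so that $K_w=\langle z\rangle$ is cyclic. The relators $r_1,\dots,r_m$ survive, so their images continue to hold; reading the hypothesis of (2.1) under the identification $x_i=z$ then gives $z^{q_i}=1$ for $1\le i\le p$. Since $\gcd(q_1,\dots,q_p)=1$, B\'ezout's identity yields $z=1$, whence $K_w$ is trivial. This is exactly the point at which the coprimality of the abelianized orders substitutes for an explicit ``kill the last generator'' relator, so that one relator is saved relative to [G].

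For the converse I would argue by normal forms: when $w\ne_G 1$, Britton's lemma for the HNN extension (equivalently the normal form theorem for the amalgam) shows that no reduction collapses the copy of $G$, so $K_w$ retains a nontrivial quotient and is nontrivial. I expect the main obstacle to be the relator bookkeeping together with the verification that the HNN/amalgam is genuine: one must design $\rho_w$ and the auxiliary generator so that both detection and collapse are accomplished by a single new relator, check that the associated subgroups are isomorphic so that Britton's lemma applies, and confirm that the $w=_G 1$ case really does reduce $K_w$ to a cyclic group on which (2.1) can be applied. In effect the delicate part is to show that (2.1) is precisely the hypothesis needed to close the gap left by removing one relator, neither more nor less.
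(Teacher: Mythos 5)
There is a genuine obstruction to the architecture you propose, visible already in first homology. You fix the presentation of $K_w$ to have generators $x_1,\dots,x_n,t$ and relators $r_1,\dots,r_m,\rho_w$, with the $r_j$ untouched. Then $H_1(K_w)$ is the quotient of $\mathbb{Z}^{n+1}/\langle \bar r_1,\dots,\bar r_m\rangle \cong G^{ab}\oplus\mathbb{Z}$ by the single element $\bar\rho_w$, and such a quotient is trivial only if $G^{ab}\oplus\mathbb{Z}$ is cyclic, i.e.\ only if $G^{ab}=1$. Hypothesis (2.1) requires only that the first $p$ generators have finite order in $G^{ab}$ with coprime orders; it does not make $G$ perfect, and in the intended application (Borisov's group in Theorem 2.2) $G^{ab}$ is certainly nontrivial. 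So no choice of $\rho_w$ can make $K_w$ trivial when $w=_G 1$: a single new relator cannot simultaneously kill $t$ and identify all the $x_i$ with one element, as your collapse step requires. You flag the design of $\rho_w$ as the ``main obstacle''; this computation shows it is not a bookkeeping issue but an impossibility.

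The paper evades this by not keeping $x_1,\dots,x_n$ as generators of the final presentation. It adjoins four new generators $a,\alpha,b,\beta$ and, besides the $r_j$ and one relator (v) carrying $w$, a batch of relations (i)--(iv) each of which expresses one old generator as a word in the others; Tietze transformations then eliminate $\alpha,\beta$ and all the $x_i$, so relations (i)--(iv) cost nothing in the final relator count and the $r_j$ get rewritten as words $r'_j$ in $a$ and $b$. The result is a $2$-generator, $(m+1)$-relator presentation whose abelianization can vanish. Your reading of the role of (2.1) is essentially right --- in the collapsed case the group becomes cyclic on $a$, relations (iii) give $x_i=a^{q_i}$, and coprimality of the $q_i$ kills $a$, which is exactly how one explicit relator is saved --- but that endgame only makes sense after the original generators have been traded in as above. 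The nontriviality direction in the paper is likewise not a one-stable-letter HNN argument but a free product with amalgamation $(G\ast F(a,\alpha))\ast_{F} F(b,\beta)$ over a free group of rank $n+3$, verified by checking that the left- and right-hand sides of (i)--(v) are bases of free subgroups.
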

\begin{proof}
We modify the construction given in [G]. Let $(x_1,...,x_n : r_1,...,r_m)$ be a presentation of a group $G$ with unsolvable word problem that satisfies (2.1). By taking a minimal set $\{x_1,...,x_p\}$ with property (2.1) we may assume that the $q_i$ are all distinct. Let $q =$ max$\{q_1,...,q_p\}$.

Let $W(x_1,...,x_n)$ denote the set of words in $\{x_1,...,x_n\}$, i.e. the set of expressions of the form $x^{\epsilon_1}_{i_1}...x^{\epsilon_r}_{i_r}$, $x_{i_j} \in \{x_1,...,x_n\}$, $\epsilon_j = \pm 1$. For $w \in W(x_1,...,x_n\}$, let $Q_w$ be the presentation with generators $x_1,...,x_n, a, \alpha, b, \beta$, and relators $r_1,...,r_m$ together with 

(i)    $a \alpha a^{-1} = b^2$

(ii)   $\alpha a \alpha^{-1} = b \beta b^{-1}$

(iii)  $a^{-q_i} x_i \alpha^{q_i} = \beta^{-i} b \beta^i$, $\quad$  $1 \le i \le p$

(iv)   $a^{-(q+i)} x_i \alpha^{(q+i)} = \beta^{-i} b \beta^i$, $\quad$ $p+1 \le i \le n$

(v)    $[w,\alpha^2] = \beta^{-(n+1)} b \beta^{(n+1)}$

where $[x,y]$ means $xyx^{-1}y^{-1}$.

Let $G_w$ be the group presented by $Q_w$. We can apply the following Tietze transformations to $Q_w$. Using (i), express $\alpha$ in terms of $a$ and $b$, substitute this expression for the occurrences of $\alpha$ in the other relations, then delete $\alpha$ from the generators and (i) from the relations. Now from (ii) express $\beta$ in terms of $a$ and $b$, substitute for $\beta$ into the other relations, and delete $\beta$ and relation (ii). Using relations (iii) and (iv) we can now write the $x_i$ as words in $a$ and $b$, substitute these into the relators $r_j$, getting relators $r'_j$ that are words in $a$ and $b$, substitute for the $x_i$ in $w$ in (v), and finally delete the $x_i$ and relations (iii) and (iv).

We are left with a presentation $P_w$ of $G_w$ with two generators, $a$ and $b$, and $(m+1)$ relations: the relators $r'_j$, $1 \le j \le m$, and the transformed relation (v). We claim that $\{P_w : w \in W(x_1,...,x_n)\}$ is an Adjan-Rabin set.

Let $U$ denote the set of elements listed on the right-hand side of the relations (i) - (v). By examining the possible cancellation in $u_1^{\epsilon_1}u_2^{\epsilon_2}$, where $u_1$ and $u_2$ are distinct elements of $U$ and $\epsilon_i = \pm1$, $i = 1,2$, it is easy to see that a non-empty reduced word in the elements of $U$ has positive length when expressed as a reduced word in $b$ and $\beta$. Thus $U$ is a basis for a free subgroup of the free group $F(b, \beta)$. Similarly, if $[w] \ne 1$ in $G$, one sees that the set of elements on the left-hand side of the relations is a basis for a free subgroup of the free product $G \ast F(a, \alpha)$. Hence if $[w] \ne 1$ in $G$ then $G_w$ is a free product with amalgamation $(G \ast F(a, \alpha)) \ast_{F} F(b, \beta)$, where $F$ is free of rank $(n+3)$. In particular $G_w \ne 1$.

If $[w] = 1$ in $G$ then (v), together with the relators $r'_1,...,r'_m$, implies that $b = 1$, and therefore $G_w$ is cyclic, generated by $a$. Also, $\alpha = 1$ by (i). 
    
Relations (iii) give $x_i = a^{q_i}$, $1 \le i \le p$. By condition (2.1) $x_i$ maps to an element in $G_w$ of order dividing $q_i$; hence in $G_w$ we have the relations $a^{q^{2}_i}$, $1 \le i \le p$. Since gcd$(q_1,...,q_p) = 1$, this implies $a = 1$, and hence $G_w = 1$. 

Thus $G_w = 1$ if and only if $[w] = 1$ in $G$. Since $G$ has unsolvable word problem, $\{P_w : w \in W(x_1,...,x_n)\}$ is an Adjan-Rabin set.                                       
\end{proof}

\begin{theorem}
There exists a 13-relator Adjan-Rabin set.
\end{theorem}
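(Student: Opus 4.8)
The plan is to apply the preceding Lemma with $m = 12$. Since that Lemma produces an $(m+1)$-relator Adjan-Rabin set from any finite $m$-relator presentation of a group with unsolvable word problem that satisfies (2.1), it suffices to exhibit a \emph{12-relator} presentation of a group with unsolvable word problem satisfying (2.1). The natural candidate is Borisov's presentation [Bor], which already has exactly twelve relators and presents a group $G$ with unsolvable word problem; the entire task is therefore to verify, or to arrange, that it satisfies (2.1).

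Property (2.1) is a condition on the abelianization $A = G/[G,G]$, which is the finitely generated abelian group presented by the integer matrix obtained by abelianizing Borisov's twelve relators. First I would compute $A$ from this matrix (via Smith normal form), and then search among the images $\bar x_1, \ldots, \bar x_n$ for a subset of generators whose orders $q_i$ are finite with gcd$(q_1, \ldots, q_p) = 1$. The cleanest outcome would be a single generator $\bar x_i$ of order $1$, that is, one that dies in the abelianization, for then (2.1) holds immediately with $p = 1$; failing that, one looks for two generators of coprime finite order.

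If Borisov's presentation does not satisfy (2.1) on the nose, the fallback is a Tietze modification that \emph{preserves} the relator count: one replaces a generator or composes existing relators so as to introduce torsion of the required coprime orders into $A$ without changing $G$ and without creating a thirteenth relator. The main obstacle is precisely this bookkeeping --- guaranteeing the coprimality condition of (2.1) while holding the number of relators fixed at twelve --- because any naive adjustment of the abelianization, such as adjoining a fresh generator together with its defining relation, costs an extra relator and would only reproduce the previously known 14-relator Adjan-Rabin set. Once (2.1) is confirmed for a 12-relator presentation, the preceding Lemma immediately yields a 13-relator Adjan-Rabin set, completing the proof.
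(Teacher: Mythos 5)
Your overall framing is the same as the paper's: feed a $12$-relator presentation of a group with unsolvable word problem satisfying (2.1) into Lemma 2.1. But there is a genuine gap: you never actually establish (2.1) for any $12$-relator presentation; you only describe a plan to ``compute and check, and if that fails, fix it.'' The entire mathematical content of the theorem lies in that unexecuted step. In Borisov's presentation the relevant relations are $\mu_i d \mu_i^{-1} = d^{\alpha}$ and $\mu_i^{-1} e \mu_i = e^{\alpha}$ for $i = 1,2$, with a \emph{common} exponent $\alpha > 3$. Abelianizing, these force $\bar d$ and $\bar e$ to have orders dividing the same integer $\alpha - 1 > 2$, so the coprimality in (2.1) is not automatic from these relations, and nothing in your proposal shows that some other generator dies or acquires a coprime order.

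The paper's resolution is not a Tietze move at all: it \emph{changes the group}, replacing the common exponent $\alpha$ by two independent exponents $u = 4$ (for $d$) and $v = 5$ (for $e$), so that $\bar d$ has order dividing $3$ and $\bar e$ has order dividing $4$, with $\gcd(3,4) = 1$. Justifying this requires re-examining Borisov's proof that the word problem is unsolvable and checking that it tolerates $u \ne v$ --- a verification inside [Bor, $\S 3$], not a formal manipulation of the presentation. Your proposed fallback, a relator-count-preserving Tietze transformation of Borisov's presentation, is the wrong tool: a Tietze transformation preserves the group $G$, and hence the torsion of $G/[G,G]$, so it cannot manufacture elements of coprime finite order that are not already present; at best it can re-choose which elements serve as generators, and you give no reason to believe suitable elements exist in Borisov's group as originally presented. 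So the proposal identifies the right target but omits the one idea that makes the theorem true.
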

\begin{proof}
Matijasevi\v c [Mat] has shown that there exists a semigroup $S$ having a presentation with two generators and three relations, and a positive word $W_0$ in the generators, such that there is no algorithm to decide, for an arbitrary positive word $W$ in the generators, whether or not $W$ and $W_0$ represent the same element of $S$. Borisov shows that this may be used to construct a presentation, with generators $a,b,c,d,$ and $e$ and 12 relations, of a group $\Gamma'$ with unsolvable word problem; see [Bor, $\S 3$]. Among the relations are 
$$\mu_{i} d \mu^{-1}_i = d^{\alpha}, \;  \mu^{-1}_{i} e \mu_i = e^{\alpha}, \quad i = 1,2$$   

where $\mu_1$ and $\mu_2$ are words in $a$ and $b$ and $\alpha$ is an arbitrary integer $> 3$. However, an examination of the proof in [Bor] that $\Gamma'$ has unsolvable word problem shows that these relations may be replaced by
$$\mu_{i} d \mu^{-1}_i = d^u, \; \mu^{-1}_i e \mu_i = e^v, \quad i = 1,2$$

for any integers $u,v > 3$.

So, taking $u = 4$, $v = 5$, we get a 12-relator presentation of a group with unsolvable word problem where the generators $d$ and $e$ have the property that the order of $\bar d$ divides 3 and the order of $\bar e$ divides 4. The result now follows from Lemma 2.1.
\end{proof}

\section{The Topology}

We briefly summarize Markov's argument [Mar]. For other discussions see [S1],[CL],[K]. We will not discuss the algorithmic aspects of the PL constructions involved; these are dealt with in [BHP]; see also [S1].

Let $P = (x_1,...,x_n : r_i,...,r_k)$ be a finite presentation of a group $G_P$.

Attach $n$ 1-handles to $B^5$ so as to get an orientable 5-manifold $V$ with $\pi_1(V) \cong F(x_i,...,x_n)$. Let $\gamma_1,...,\gamma_k$ be disjoint circles in $\partial V$ such that $[\gamma_j]$ is conjugate to $r_j$ in $\pi_1(V) \cong \pi_1(\partial V)$, $1 \le j \le k$. Since homotopy implies isotopy for 1-manifolds in a 4-manifold by general position, $\boldsymbol{\gamma} = \bigcup_{j=1}^k \gamma_j$ is well-defined up to isotopy in $\partial V$. Let $N_P$ be obtained by attaching 2-handles $H(\gamma_j)$ to $V$ along $\gamma_j$, $1 \le j \le k$. We express this as $N_P = V \cup H(\boldsymbol{\gamma})$. Clearly $\pi_1(N_P) \cong G_P$. Also, since $N_P$ has a 2-dimensional spine, a general position argument shows that inclusion $\partial N_P \to N_P$ induces an isomorphism on fundamental groups.

The homeomorphism type of $N_P$ depends only on $P$ and a choice of framing ($\in \mathbb{Z}_2$) of the normal bundle of $\gamma_j$ in $\partial V$, $1 \le j \le k$. To ensure that it depends only on $P$ we note that there is an obvious embedding of $V$ in $\mathbb{R}^5$. Then $\gamma_j$ bounds a disk in $\mathbb{R}^5$ and we choose the framimg to be the 0-framing, i.e. the one that extends over the normal bundle of the disk.

Let $\alpha_1,...,\alpha_n$ be disjoint circles in $\partial N_P$ that bound disjoint disks in $\partial N_P$, and let $W_P$ be the result of attaching 2-handles $H(\alpha_i)$ to $N_P$ along $\alpha_i$, $1 \le i \le n$, using the 0-framing. Let $M_P = \partial W_P$. Then $\pi_1(M_P) \cong \pi_1(W_P) \cong \pi_1(N_P) \cong G_P$.

Markov's key observation is the following.  

\begin{lemma}{\rm(Markov)}
$G_P = 1$ if and only if $M_P \cong \#_k (S^2 \times S^2)$.
\end{lemma}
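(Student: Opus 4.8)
The plan is to treat the two implications separately; the reverse implication is immediate and the forward one carries all the content. If $M_P\cong\#_k(S^2\times S^2)$, then since the excerpt already gives $\pi_1(M_P)\cong G_P$ and $\pi_1(\#_k(S^2\times S^2))=1$, we read off $G_P=1$. So it remains to produce a homeomorphism $M_P\cong\#_k(S^2\times S^2)$ under the assumption $G_P=1$, and I would do this by analyzing the handle decomposition of the $5$-manifold $W_P$.

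First I would record a fact that holds for every $P$: attaching a $2$-handle with the $0$-framing along a circle that bounds a disk in $\partial N_P$ changes the boundary by a connected sum with $S^2\times S^2$, since in a $4$-ball neighborhood of the circle-with-disk the attachment is modeled on $0$-framed surgery on an unknot in $S^4$. As the $\alpha_i$ bound disjoint disks, these models sit in disjoint balls and
\[
M_P\;\cong\;\partial N_P\,\#\,\#_n(S^2\times S^2).
\]
In particular $M_P$ does not depend on the chosen system of trivial curves $\alpha_i$, so when convenient I may replace them by any isotopic system. Equivalently, I view $W_P$ as the $5$-dimensional handlebody with one $0$-handle, the $n$ $1$-handles of $V$, the $k$ relator handles $H(\gamma_j)$, and the $n$ auxiliary handles $H(\alpha_i)$, and I aim to show $W_P\cong\natural_k(S^2\times D^3)$, whose boundary is $\#_k(S^2\times S^2)$.

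Assume now $G_P=1$. The role of the auxiliary handles is to cancel the $1$-handles. Since $\pi_1(\partial N_P)\cong G_P=1$, the class $x_i$ is trivial, so a circle running once over the $i$-th $1$-handle is null-homotopic in $\partial N_P$ and hence isotopic, by homotopy-implies-isotopy for $1$-manifolds in a $4$-manifold, to the trivial circle $\alpha_i$. I would use this isotopy to slide the attaching circle of $H(\alpha_i)$ into a position where it meets the belt sphere of the $i$-th $1$-handle transversally in a single point; then $H(\alpha_i)$ and the $i$-th $1$-handle form a geometrically cancelling pair. Carrying out these $n$ cancellations simultaneously (the circles pass over distinct $1$-handles and so may be kept disjoint) removes all $1$-handles and leaves only the $k$ relator handles. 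Their attaching circles now lie in $\partial B^5=S^4$, where any system of disjoint circles is an unlink by general position; since $W_P$ is stably parallelizable (built with $0$-framings, compatibly with the embedding of $V$ in $\mathbb{R}^5$), the residual framings are the $0$-framings. Hence $W_P\cong\natural_k(S^2\times D^3)$ and $M_P=\partial W_P\cong\#_k(S^2\times S^2)$.

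The crux is the cancellation of the $1$-handles, and it is exactly here that $G_P=1$ is used: a trivial circle can be isotoped onto a meridian of the $i$-th $1$-handle precisely because the corresponding element $x_i$ is null-homotopic. This is why one cancels the $1$-handles against the auxiliary handles $H(\alpha_i)$ rather than against the relator handles: the latter would require the presentation to be Andrews--Curtis trivializable, which is not available, whereas the auxiliary handles supply ``sacrificial'' $2$-handles whose cancellation is unobstructed once $\pi_1$ is trivial. The remaining delicate points are bookkeeping: verifying that the framing carried by $H(\alpha_i)$ after the slide is the one needed for cancellation, and that the relator handles survive as a genuinely $0$-framed unlink, both of which I would control using stable parallelizability (equivalently, the spin condition, which rules out any twisted summand $S^2\widetilde{\times}S^2$). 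The algorithmic validity of all these PL manipulations is the content of [BHP].
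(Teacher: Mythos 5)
Your proof is correct and follows essentially the same route as the paper's: the paper isotopes the trivial circles $\alpha_i$ onto circles $\beta_i$ dual to the co-cores of the 1-handles (using $\pi_1(\partial N_P)\cong G_P=1$ and homotopy-implies-isotopy for circles in a 4-manifold) and then reorders the handle attachments so that $V\cup H(\boldsymbol{\beta})\cong B^5$, which is exactly your geometric cancellation of the 1-handles against the auxiliary 2-handles. The remaining relator handles then attach along a 0-framed unlink in $S^4$, giving $\natural_k(S^2\times D^3)$, just as you conclude.
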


\begin{proof}
Since $\pi_1(M_P) \cong G_P$ the ``if''direction is clear.

For the converse, suppose $G_P = 1$. Let $\beta_1,...,\beta_n$ be disjoint circles in $\partial V$ that are dual to the co-cores of the 1-handles and disjoint from $\boldsymbol{\gamma}$. Then we may regard $\beta_1,...,\beta_n$ as lying in $\partial N_P$. Recalling that $\pi_1(\partial N_P) \cong G_P = 1$, $\boldsymbol{\alpha} = \bigcup_{i = 1}^n \alpha_i$ is isotopic to $\boldsymbol{\beta} = \bigcup_{i=1}^n \beta_i$ in $\partial N_P$. Therefore
\begin{align*}
W_P &= N_P \cup H(\boldsymbol{\alpha})\\
&\cong N_P \cup H(\boldsymbol{\beta})\\
&=(V \cup H(\boldsymbol{\gamma})) \cup H(\boldsymbol{\beta})\\
&=(V \cup H(\boldsymbol{\beta})) \cup H(\boldsymbol{\gamma})\\
&\cong B^5 \cup H(\boldsymbol{\gamma})\\
&\cong \natural_k (S^2 \times D^3)
\end{align*}

where $\natural$ denotes boundary connected sum.

Hence $M_P = \partial W_P \cong \#_k (S^2 \times S^2)$.
\end{proof}

The above construction gives an algorithm that takes a finite $k$-relator presentation $P$ of a group $G_P$ and produces a closed 4-manifold $M_P$ such that $G_P = 1$ if and only if $M_P \cong \#_k (S^2 \times S^2)$. To complete the proof of Markov's theorem we note that if $\mathcal {P}$ is a $k$-relator Adjan-Rabin set then an algorithm to decide, for a given $P \in \mathcal{P}$, whether or not the manifold $M_P$ is homeomorphic to $\#_k (S^2 \times S^2)$ would give an algorithm to decide whether or not $G_P = 1$, a contradiction.

We now describe a modification of the proof of Lemma 3.1 that enables us to replace $\#_k (S^2 \times S^2)$ by $\#_{(k-1)} (S^2 \times S^2)$.

Let $\boldsymbol{\alpha'} = \bigcup_{i=1}^{n-1} \alpha_i$, $\boldsymbol{\beta'} = \bigcup_{i=1}^{n-1} \beta_i$, define $W'_P = N_P \cup H(\boldsymbol{\alpha'})$, and let $M'_P = \partial W'_P$. Note that $\pi_1(M'_P) \cong \pi_1(W'_P) \cong \pi_1(N_P) \cong G_P$.

\begin{lemma}
$G_P = 1$ if and only if $M'_P \cong \#_{(k-1)} (S^2 \times S^2)$.
\end{lemma}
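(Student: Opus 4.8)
The plan is to follow the proof of Lemma 3.1 verbatim for as long as possible, the single new feature being that one $1$-handle of $V$ now survives the dualizing step and must instead be removed using the relators. The reverse implication is immediate: since $\pi_1(M'_P)\cong G_P$ and $\#_{(k-1)}(S^2\times S^2)$ is simply connected, $M'_P\cong\#_{(k-1)}(S^2\times S^2)$ forces $G_P=1$. So assume $G_P=1$. Exactly as before, the circles $\beta_1,\dots,\beta_{n-1}$ dual to the first $n-1$ co-cores lie in $\partial N_P$ and are disjoint from $\boldsymbol{\gamma}$; because $\pi_1(\partial N_P)\cong G_P=1$, every component of each of the $(n-1)$-component links $\boldsymbol{\alpha'}$ and $\boldsymbol{\beta'}$ is null-homotopic, so the two links are homotopic and hence, by general position, isotopic in $\partial N_P$. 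Therefore
\[
W'_P=N_P\cup H(\boldsymbol{\alpha'})\cong N_P\cup H(\boldsymbol{\beta'})=(V\cup H(\boldsymbol{\gamma}))\cup H(\boldsymbol{\beta'})\cong(V\cup H(\boldsymbol{\beta'}))\cup H(\boldsymbol{\gamma}).
\]

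Next I would cancel handles. The $n-1$ two-handles $H(\boldsymbol{\beta'})$ are attached along circles dual to the first $n-1$ one-handles of $V$, so they cancel those one-handles and leave a single one-handle; thus $V\cup H(\boldsymbol{\beta'})\cong S^1\times D^4$, with $\pi_1\cong\langle x_n\rangle\cong\mathbb{Z}$, and hence $W'_P\cong(S^1\times D^4)\cup H(\boldsymbol{\gamma})$. Here each $\gamma_j$ is attached along a circle in $\partial(S^1\times D^4)=S^1\times S^3$ whose class is $x_n^{\sigma_j}$, where $\sigma_j$ is the exponent sum of $x_n$ in $r_j$. Since $\pi_1(W'_P)\cong G_P=1$, these classes must generate $\pi_1(S^1\times S^3)\cong\mathbb{Z}$, i.e.\ $\gcd(\sigma_1,\dots,\sigma_k)=1$.

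The essential new step is to exploit this coprimality to delete the surviving one-handle. Sliding $H(\gamma_i)$ over $H(\gamma_j)$ replaces $\sigma_i$ by $\sigma_i\pm\sigma_j$, so by the Euclidean algorithm a finite sequence of handle slides converts $(\sigma_1,\dots,\sigma_k)$ into $(1,0,\dots,0)$. After these slides the first two-handle is attached along a circle representing $x_n$; this circle is homotopic, and so by general position isotopic, to one meeting the belt sphere of the remaining one-handle transversally in a single point, and the two then form a canceling pair. The other $k-1$ two-handles are now attached along null-homotopic circles, so after the cancellation they form a null-homotopic, hence unknotted and unlinked, $(k-1)$-component link in $S^4$; with the $0$-framing over the disks they bound, each contributes an $S^2\times D^3$. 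This yields $W'_P\cong\natural_{(k-1)}(S^2\times D^3)$ and therefore $M'_P=\partial W'_P\cong\#_{(k-1)}(S^2\times S^2)$.

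The step I expect to be the main obstacle is the framing bookkeeping. The framing of a two-handle's attaching circle lives in $\mathbb{Z}_2$ and can be altered by a handle slide, so one must confirm that the final $k-1$ two-handles are genuinely attached with the $0$-framing — giving $S^2\times D^3$ summands rather than the nontrivial $D^3$-bundle over $S^2$, whose boundary would introduce $\mathbb{CP}^2$ and $\overline{\mathbb{CP}^2}$ summands. I expect this to be controllable exactly as the framings are in Lemma 3.1, namely by arranging that each surviving circle bounds a disk over which the $0$-framing extends; checking that this persists through both the slides and the one/two-handle cancellation is the single point that genuinely needs care.
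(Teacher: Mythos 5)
Your argument is essentially identical to the paper's proof: the same isotopy of $\boldsymbol{\alpha'}$ to $\boldsymbol{\beta'}$, the same reduction of $W'_P$ to $(S^1 \times D^4) \cup H(\boldsymbol{\gamma})$, and the same Euclidean-algorithm handle slides converting the classes $([\gamma_1],\dots,[\gamma_k])$ to $(1,0,\dots,0)$, followed by cancellation of the surviving 1-handle. The framing bookkeeping you flag as the delicate point is also left implicit in the paper (which simply fixes the 0-framing at the outset via disks bounded in $\mathbb{R}^5$); your write-up is, if anything, more explicit about that issue than the original.
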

\begin{proof} 
As before, the ``if'' direction is clear.

Assume $G_P = 1$. Then, since $\pi_1(\partial N_P) = 1$ $\boldsymbol{\alpha'}$ is isotopic to $\boldsymbol{\beta'}$ in $\partial N_P$, and as in the proof of Lemma 3.1
$$ W'_P \cong (V \cup H(\boldsymbol{\beta'})) \cup H(\boldsymbol{\gamma})$$
which is homeomorphic to $(S^1 \times D^4) \cup H(\boldsymbol{\gamma})$.

Let $a_j = [\gamma_j] \in \pi_1(S^1 \times D^4) \cong \mathbb{Z}$, $1 \le j \le k$. Orient $\gamma_j$ so that $a_j \ge 0$. Since $\pi_1(W'_P) = 1$, gcd$(a_1,...,a_k) = 1$. Therefore, by a sequence of moves of the form 
\begin{align*}
a_r & \mapsto a_r - a_s\\
a_j & \mapsto a_j, \; j \ne r,
\end{align*}
for some $r$ and some $s \ne r$ with $a_s \le a_r$, followed by a permutation, we can transform $(a_1,...,a_k)$ to $(1,0,...,0)$.

Since the above move can be realized by sliding $H(\gamma_r)$ over $H(\gamma_s)$,
$$W'_P \cong (S^1 \times D^4) \cup H(\boldsymbol{\gamma'})$$
where $([\gamma'_1],...,[\gamma'_k]) = (1,0,...,0)$. Thus $W'_P$ is homeomorphic to $B^5$ with $(k-1)$ 2-handles attached with the 0-framing, i.e. $\natural_{(k-1)} (S^2 \times D^3)$. Hence $M'_P \cong \#_{(k-1)} (S^2 \times S^2)$.
\end{proof}

\begin{proof} [Proof of Theorem \ref{thm: rec prob}]
This follows from Theorem 2.2 and Lemma 3.2.
\end{proof}

\end{document}